\newtheorem{theorem}{Theorem}
\newtheorem{lemma}{Lemma}
\newtheorem{corollary}{Corollary}
\newtheorem{proposition}{Proposition}
\newtheorem{remark}{Remark}
\newenvironment{proof}{\begin{trivlist} \item[\hskip\labelsep{\it Proof.}]}{$\hfill\Box$\end{trivlist}}
\newcommand{\rd}{\,\mathrm{d}}
\newcommand{\bsj}{\boldsymbol{j}}
\newcommand{\bsx}{\boldsymbol{x}}
\newcommand{\bsz}{\boldsymbol{z}}
\newcommand{\bsk}{\boldsymbol{k}}
\newcommand{\bsm}{\boldsymbol{m}}
\newcommand{\bst}{\boldsymbol{t}}
\newcommand{\bszero}{\boldsymbol{0}}
\newcommand{\bsone}{\boldsymbol{1}}
\newcommand{\RR}{\mathbb{R}}
\newcommand{\NN}{\mathbb{N}}
\newcommand{\bbD}{\mathbb{D}}
\newcommand{\calD}{\mathcal{D}}
\newcommand{\calR}{\mathcal{R}}
\newcommand{\calF}{\mathcal{F}}
\newcommand{\calP}{\mathcal{P}}
\newcommand{\calS}{\mathcal{S}}
\title{Optimal discrepancy rate of point sets \\ in Besov spaces with negative smoothness}
\author{Ralph Kritzinger \thanks{The author is supported by the Austrian Science Fund (FWF): Project F5509-N26, which is a part of the Special Research Program "Quasi-Monte Carlo Methods: Theory and Applications".}}
\date{}
\begin{document}

\setlength{\parindent}{0em} 

\maketitle

\begin{abstract}
We consider the local discrepancy of a symmetrized version of Hammersley type point sets in the unit square.
As a measure for the irregularity of distribution we study the norm of the local discrepancy
in Besov spaces with dominating mixed smoothness. It is known that for Hammersley type points
this norm has the best possible rate provided that the smoothness parameter of the Besov space is nonnegative.
While these point sets fail to achieve the same for negative smoothness, we will prove
in this note that the symmetrized versions overcome this defect. We conclude with some consequences on discrepancy in further function spaces with dominating mixed smoothness
and on numerical integration based on quasi-Monte Carlo rules.
\end{abstract} 

\centerline{\begin{minipage}[hc]{130mm}{
{\em Keywords:} discrepancy, Hammersley point set, Besov spaces, numerical integration\\
{\em MSC 2000:} 11K06, 11K38, 46E35, 65C05}
\end{minipage}}

 \allowdisplaybreaks
 
\section{Introduction}

For a multiset $\calP$ of $N\geq 1$ points in the unit square $[0,1]^2$ we define the local discrepancy as
$$ D_{\calP}(\bst):=\frac{1}{N}\sum_{\bsz \in \calP}\bsone_{[\bszero,\bst)}(\bsz)-t_1t_2. $$
Here $\bsone_{I}$ denotes the indicator function of an interval $I\subseteq [0,1)^2$. For $\bst=(t_1,t_2)\in [0,1]^2$ we set $[\bszero,\bst):=[0,t_1)\times [0,t_2)$ with volume $t_1t_2$.
To obtain a global measure for the irregularity of a point distribution $\calP$, one usually considers a norm of
the local discrepancy in some function space. A popular choice are the $L_p$ spaces for $p\in[1,\infty]$, which
are defined as the collection of all functions $f$ on $[0,1)^2$ with finite $L_p([0,1)^2)$ norm. For $p=\infty$ this norm is the supremum norm, i.e.
$$ \left\|f| L_{\infty}([0,1)^2)\right\|:=\sup_{\bst \in [0,1]^2}|f(\bst)|, $$
and for $p\in [1,\infty)$ these norms are given by $$\left\|f| L_p([0,1)^2)\right\|:=\left(\int_{[0,1)^2}|f(\bst)|^p\mathrm{d} \bst\right)^{\frac{1}{p}}.$$
Throughout this note, for functions $f,g:\NN \rightarrow \RR^+$, we write $g(N) \lesssim f(N)$ and $g(N) \gtrsim f(N)$, 
if there exists a constant $C>0$ independent of $N$ such that $g(N) \le C f(N)$ or $g(N) \geq C f(N)$ for all $N \in \NN$, $N \ge 2$, respectively. 
We write $f(N) \asymp g(N)$ to express that $g(N) \lesssim f(N)$ and $g(N) \gtrsim f(N)$ holds simultaneously. It is a well-known fact that for every $p\in[1,\infty]$ and $N\in\NN$ any $N$-element point set $\calP$ in $[0,1)^2$ satisfies 
\begin{equation}\label{lowproinovkritzbesov}
\left\|D_{\calP}| L_p([0,1)^2)\right\| \gtrsim N^{-1}(\log N)^{\frac{1}{2}}.
\end{equation} This inequality was shown by Roth~\cite{Roth} for $p=2$ (and therefore for $p \in (2,\infty]$ because of the monotonicity of the $L_p$ norms) and Schmidt~\cite{schX}  for $p\in (1,2)$. From the work of Hal\'{a}sz~\cite{hala} we know that it also holds for $p=1$. 
In recent years
several other norms of the local discrepancy have been studied. In this note we would like to investigate the discrepancy of certain point sets in
Besov spaces $S_{p,q}^rB([0,1)^2)$ with dominating mixed smoothness. The parameter $p$ describes the integrability of functions
belonging to this space, while $r$ is related to the smoothness of these functions. The third parameter $q$ is a regulation parameter. A definition of $S_{p,q}^rB([0,1)^2)$ can be found in Section~\ref{prelimkritzbesov}. We denote the Besov norm of a function $f$ by $\|f|S_{p,q}^rB([0,1)^2)\|$. 
The study of discrepancy in function spaces with dominating mixed smoothness was initiated by Triebel \cite{Tri10,Tri10b}, since
it is directly connected to numerical integration. He could show that for all $1\leq p,q \leq \infty$ and $r\in \RR$ satisfying $\frac{1}{p}-1<r<\frac{1}{p}$ and $q<\infty$ if $p=1$ and $q>1$ if $p=\infty$ and for any $N\in\NN$ the local discrepancy of any $N$-element point set $\calP$ in $[0,1)^2$ satisfies
\begin{equation} \label{lowerboundkritzbesov} \left\| D_{\calP} | S_{p,q}^rB([0,1)^2) \right\|\gtrsim N^{r-1}(\log{N})^{\frac{1}{q}}. \end{equation}
Also, for any $N\geq 2$, there exists a point set $\calP$ in $[0,1)^2$ with $N$ points such that
$$ \left\| D_{\calP} | S_{p,q}^rB([0,1)^2) \right\|\lesssim N^{r-1}(\log{N})^{\left(\frac{1}{q} +1-r\right)}. $$
Hinrichs showed in \cite{hin2010} that the gap between the exponents of the lower and the upper bounds can be closed for $1\leq p,q \leq \infty$ and $0 \leq r <\frac{1}{p}$ and that the lower bound \eqref{lowerboundkritzbesov} is sharp. He used Hammersley type point sets $\calR_n$ as introduced below.
It follows from his proof that these point sets can not be used to close the gap also for the parameter range $1/p-1<r<0$. It remained an open problem to find a point set which closes this gap also for this negative smoothness range. This problem was again mentioned in \cite[Problem 3]{hin2014} (here also for higher dimensions) and \cite[Remark 6.8]{Ull}. It is
the aim of this note to show that a solution is possible by applying some simple modifications to the point sets $\calR_n$, which will lead to the main result of this note.  \vspace{5mm}

In \cite{hin2010} Hinrichs studied the class of Hammersley type point sets
$$ \calR_n:=\left\{\left(\frac{t_n}{2}+\frac{t_{n-1}}{2^2}+\dots+\frac{t_1}{2^n},\frac{s_1}{2}+\frac{s_2}{2^2}+\dots+\frac{s_n}{2^n}\right)| t_1,\dots,t_n \in \{0,1\}\right\} $$
for $n\in\NN$, where $s_i=t_i$ or $s_i=1-t_i$ depending on $i$. It is obvious that $\calR_n$ has $2^n$ elements. We fix $\calR_n$ and introduce three
connected point sets by
\begin{eqnarray*}
    \calR_{n,1}&:=&\{(x,1-y)|(x,y)\in\calR_n\}, \\
		\calR_{n,2}&:=&\{(1-x,y)|(x,y)\in\calR_n\}, \\
		\calR_{n,3}&:=&\{(1-x,1-y)|(x,y)\in\calR_n\}. 
\end{eqnarray*} 
We set $\widetilde{\calR}_n:=\calR_n\cup \calR_{n,1} \cup \calR_{n,2} \cup \calR_{n,3}$ and call $\widetilde{\calR}_n$ a symmetrized Hammersley type point set. 
In literature one often finds a symmetrization in the sense of Davenport~\cite{daven}, which would be $\calR_n\cup \calR_{n,1}$. However, for our purposes we need
to work with the point set $\widetilde{\calR}_n$, which has $N=2^{n+2}$ elements, where some points might coincide.
With the point sets $\widetilde{\calR}_n$ we have the following main result of this note.

\begin{theorem} \label{theo1kritzbesov} Let $1\leq p,q \leq \infty$ and $r\in\RR$ such that $1/p-1<r<1/p$. Then the point sets $\widetilde{\calR}_n$ in $[0,1)^2$ with $N=2^{n+2}$ elements satisfy
$$ \|D_{\widetilde{\calR}_n}|S_{p,q}^rB([0,1)^2)\|\lesssim N^{r-1}(\log{N})^{1/q}.  $$
\end{theorem}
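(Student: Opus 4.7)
The plan is to follow the standard route for this kind of estimate: expand $D_{\widetilde{\calR}_n}$ in the Haar basis and bound the resulting discrete Besov sequence norm. More precisely, I would use the characterisation
$$ \|f|S_{p,q}^rB([0,1)^2)\| \asymp \left(\sum_{\bsj} 2^{(j_1+j_2)(r-1/p+1)q} \Big(\sum_{\bsm}|\mu_{\bsj,\bsm}(f)|^p\Big)^{q/p}\right)^{1/q}, $$
where $\mu_{\bsj,\bsm}(f)$ are the (appropriately renormalised) Haar coefficients indexed by levels $\bsj=(j_1,j_2)\in\{-1,0,1,\ldots\}^2$ and shifts $\bsm$, valid precisely in the parameter range $1/p-1<r<1/p$. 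This reduces the theorem to giving level-wise estimates for $|\mu_{\bsj,\bsm}(D_{\widetilde{\calR}_n})|$ and then summing.

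First I would collect the Haar coefficients of $D_{\calR_n}$ that Hinrichs computed in \cite{hin2010}; these have the form of an explicit expression depending on the dyadic box $I_{\bsj,\bsm}$, and they are of optimal size when $j_1+j_2\geq n$ but become too large (for $r<0$) at low levels $j_1+j_2<n$, essentially because the reflected-Hammersley structure forces a bias in how many points fall in large anchored boxes. Next I would exploit the linearity of the Haar coefficient in the point set together with the simple observation that the maps $(x,y)\mapsto(x,1-y)$, $(1-x,y)$, $(1-x,1-y)$ send a tensor Haar function $h_{j_1,m_1}\otimes h_{j_2,m_2}$ to $\pm h_{j_1,m_1}\otimes h_{j_2,2^{j_2}-1-m_2}$ (and the obvious analogues). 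Combined with the identity $\bsone_{[\bszero,\bst)}(1-x,1-y)=\bsone_{[\bszero,\bsone-\bst)}\cdot(\cdots)$ and $t_1 t_2$ being affine in each coordinate, this should give cancellations: for $j_1\geq 0$ and $j_2\geq 0$ the sum of the four reflected Haar coefficients collapses to a symmetrised expression, and for the "edge" levels $j_1=-1$ or $j_2=-1$ (which carry a constant in one variable) the full sum over the four reflections cancels identically. The latter is the effect analogous to Davenport's classical $L_2$-symmetrisation and is precisely what rescues the low-level regime.

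The main technical step is therefore a lemma of the form: for all $\bsj=(j_1,j_2)$ with $j_1,j_2\geq 0$ and all admissible shifts $\bsm$,
$$ |\mu_{\bsj,\bsm}(D_{\widetilde{\calR}_n})| \lesssim \frac{1}{N}\min\!\left\{2^{-j_1-j_2},\,2^{-n}\right\}, $$
while $\mu_{\bsj,\bsm}(D_{\widetilde{\calR}_n})=0$ whenever $\min(j_1,j_2)=-1$. The first bound with $2^{-j_1-j_2}$ is the trivial geometric bound that always applies; the bound with $2^{-n}$ is Hinrichs' sharp estimate at high levels; the vanishing at the edge levels is the new effect of the full symmetrisation. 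Once this is established, plugging into the Haar characterisation, noting that for each level $\bsj$ with $j_1,j_2\geq 0$ there are $2^{j_1+j_2}$ nonzero shifts, and splitting the double sum at $j_1+j_2=n$ gives
$$ \|D_{\widetilde{\calR}_n}|S_{p,q}^rB\|^q \lesssim \sum_{j_1+j_2\leq n} 2^{(j_1+j_2)(r-1/p+1)q} \bigl(2^{j_1+j_2}\,2^{-qn p}\,N^{-p}\bigr)^{q/p\cdot(1)} \;+\;\sum_{j_1+j_2>n}(\cdots), $$
and in both ranges elementary geometric series in $j_1+j_2$ and counting of pairs $(j_1,j_2)$ with fixed sum yield the target $N^{r-1}(\log N)^{1/q}$; in particular $(\log N)^{1/q}$ arises from the $n+1$ equal contributions at the critical level $j_1+j_2\approx n$.

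The hard part is the Haar-coefficient lemma for the full symmetrisation, specifically tracking the signs and index flips $m_i\mapsto 2^{j_i}-1-m_i$ through all four reflected copies and verifying that the "constant-in-one-variable" coefficients vanish identically (rather than merely improving); this is where the choice $\widetilde{\calR}_n=\calR_n\cup\calR_{n,1}\cup\calR_{n,2}\cup\calR_{n,3}$, as opposed to Davenport's two-term symmetrisation, is essential. Everything else should reduce to a careful but routine repetition of Hinrichs' level-by-level estimates, transferred from $\calR_n$ to $\widetilde{\calR}_n$ through the reflection identities.
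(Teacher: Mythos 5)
Your overall route is exactly the paper's: the Haar characterisation of $S_{p,q}^rB([0,1)^2)$, explicit control of the Haar coefficients of $D_{\widetilde{\calR}_n}$ via the reflection/index-flip identities $m_i\mapsto 2^{j_i}-1-m_i$, and level-wise summation; you also correctly identify the one genuinely new effect, namely that the four-fold symmetrisation kills the coefficients that blocked the range $r<0$ for $\calR_n$. However, your ``main technical lemma'' is false in one place and too weak in another. It is not true that $\mu_{\bsj,\bsm}(D_{\widetilde{\calR}_n})=0$ whenever $\min(j_1,j_2)=-1$: for $\bsj=(k,-1)$ or $(-1,k)$ with $k\geq n$ no point of $\widetilde{\calR}_n$ lies in the interior of the relevant dyadic boxes, so the counting part vanishes and the coefficient equals the pure volume contribution $\pm2^{-(2k+3)}\neq 0$; no symmetrisation of the point set can cancel the term coming from $-t_1t_2$ when there are no points to cancel it against. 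The vanishing holds only for $k<n$ (and for $\bsj=(-1,-1)$), which is what the paper proves; the surviving levels $k\geq n$ contribute $\sum_{k\geq n}2^{k(r-1)q}\lesssim 2^{n(r-1)q}$ and are harmless because $r<1$, so this is repairable --- but as stated your lemma is wrong, and the cancellation at $k<n$ is between the counting part and the volume part, not among the four reflections alone.

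The second issue is quantitative: the bound $|\mu_{\bsj,\bsm}|\lesssim N^{-1}\min\{2^{-j_1-j_2},2^{-n}\}$ applied to all $2^{j_1+j_2}$ shifts does not prove the theorem on its full stated range $1/p-1<r<1/p$. At a level with $j_1+j_2=\ell\geq n$ it yields a contribution of order $(\ell+1)\,2^{(\ell r-n)q}$ to the $q$-th power of the norm, and $\sum_{\ell\geq n}(\ell+1)\,2^{(\ell r-n)q}$ diverges for every $r\geq 0$. What is needed (and what the paper, following Hinrichs, records in case $(ii)$ of its Proposition on the Haar coefficients) is that at such levels all but at most $2^{n+2}$ coefficients are exactly the volume value $2^{-2(j_1+j_2+2)}$, while only the $O(2^n)$ boxes actually containing points obey the weaker bound $2^{-(n+j_1+j_2)}$; splitting the $\ell^p$-sum over $\bsm$ accordingly is what makes the high levels summable for all $r<1/p$. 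Your uniform estimate is adequate for the genuinely new range $r<0$, but a complete proof of the theorem as stated must retain this refinement.
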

 
We would like to stress again that our result improves on \cite[Theorem 1.1]{hin2010} in the sense that we extended the range for the smoothness parameter $r$ to negative values.

\section{Preliminaries} \label{prelimkritzbesov}
We give a definition of the Besov spaces with dominating mixed smoothness.
Let therefore $\calS(\RR^2)$ denote the Schwartz space and $\calS'(\RR^2)$ the space of tempered distributions on $\RR^2$.
For $f \in \calS'(\RR^2)$ we denote by $\calF f$ the Fourier transform of $f$ and by $\calF^{-1} f$ its inverse. Let $\phi_0\in \calS(\RR)$ satisfy $\phi_0(t)=1$ for $|t|\leq 1$ and $\phi_0(t)=0$ for $|t|> \frac{3}{2}$. Let
$$ \phi_k(t)=\phi_0(2^{-k}t)-\phi_0(2^{-k+1}t), $$
where $t\in \RR, k\in \NN$, and $\phi_{\bsk}(\bst)=\phi_{k_1}(t_1) \phi_{k_2}(t_2)$
for $\bsk=(k_1,k_2)\in\NN_0^2$, $\bst=(t_1,t_2)\in \RR^2$. We note that $\sum_{\bsk \in \NN_0^2}\phi_{\bsk}(\bst)=1$ for all $\bst \in \RR^2$. The functions $\calF^{-1}(\phi_{\bsk}\calF f)$ are entire analytic functions for any $f \in \calS'(\RR^2)$. Let $0<p,q\leq \infty$ and $r\in \RR$. The Besov space $S_{p,q}^rB(\RR^2)$ of dominating mixed smoothness consists of all $f \in \calS'(\RR^2)$ with finite quasi-norm
$$ \left\|f| S_{p,q}^rB(\RR^2) \right\| =\left(\sum_{\bsk \in \NN_0^2}2^{r(k_1+k_2)q}\left\|\calF^{-1}(\phi_{\bsk} \calF f) | L_p(\RR^2)\right\|^q\right)^{\frac{1}{q}}, $$
with the usual modification if $q=\infty$.
Let $\calD([0,1)^2)$ be the set of all complex-valued infinitely differentiable functions on $\RR^2$ with compact support in the interior of $[0,1)^2$ and let $\calD'([0,1)^2)$ be its dual space of all distributions in $[0,1)^2$. The Besov space $S_{p,q}^rB([0,1)^2)$ of dominating mixed smoothness on the domain $[0,1)^2$ consists of all functions $f \in \calD'([0,1)^2)$ with finite quasi norm
$$ \left\|f| S_{p,q}^rB([0,1)^2)\right\|=\inf{\left\{\left\|g | S_{p,q}^rB(\RR^2)\right\|:g \in S_{p,q}^rB(\RR^2), g|_{[0,1)^2}=f\right\}}. $$

Actually, we will not make use of this technical definition. For our approach it is more convenient
to employ a characterization of Besov spaces via Haar functions, which we define in the following. 

A dyadic interval of length $2^{-j}, j\in {\mathbb N}_0,$ in $[0,1)$ is an interval of the form 
$$ I=I_{j,m}:=\left[\frac{m}{2^j},\frac{m+1}{2^j}\right) \ \ \mbox{for } \  m=0,1,\ldots,2^j-1.$$ We also define $I_{-1,0}=[0,1)$.
The left and right half of $I_{j,m}$ are the dyadic intervals $I_{j+1,2m}$ and $I_{j+1,2m+1}$, respectively. For $j\in\NN_0$, the Haar function $h_{j,m}$  
is the function on $[0,1)$ which is  $+1$ on the left half of $I_{j,m}$, $-1$ on the right half of $I_{j,m}$ and 0 outside of $I_{j,m}$. The $L_\infty$-normalized Haar system consists of
all Haar functions $h_{j,m}$ with $j\in{\mathbb N}_0$ and  $m=0,1,\ldots,2^j-1$ together with the indicator function $h_{-1,0}$ of $[0,1)$.
Normalized in $L_2([0,1))$ we obtain the orthonormal Haar basis of $L_2([0,1))$. 

Let ${\mathbb N}_{-1}=\NN_0 \cup \{-1\}$ and define ${\mathbb D}_j=\{0,1,\ldots,2^j-1\}$ for $j\in{\mathbb N}_0$ and ${\mathbb D}_{-1}=\{0\}$.
For $\bsj=(j_1,j_2)\in{\mathbb N}_{-1}^2$ and $\bsm=(m_1,m_2)\in {\mathbb D}_{\bsj} :={\mathbb D}_{j_1} \times {\mathbb D}_{j_2}$, 
the Haar function $h_{\bsj,\bsm}$ is given as the tensor product 
$$h_{\bsj,\bsm}(\bst) = h_{j_1,m_1}(t_1) h_{j_2,m_2}(t_2) \ \ \ \mbox{ for } \bst=(t_1,t_2)\in[0,1)^2.$$
We speak of $I_{\bsj,\bsm} = I_{j_1,m_1} \times I_{j_2,m_2}$ as dyadic boxes. 

We have the following crucial result \cite[Theorem 2.41]{Tri10}. 

\begin{proposition} \label{equivalencekritzbesov} 
Let $0<p,q\leq \infty$, $1<q\leq \infty$ if $p=\infty$, and $\frac{1}{p}-1<r<\min\left\{\frac{1}{p},1\right\}$. Let $f \in \calD'([0,1)^2)$.
Then $f \in S_{p,q}^rB([0,1)^2)$ if and only if it can be represented as
$$ f=\sum_{\bsj \in \NN_{-1}^2}\sum_{\bsm \in \bbD_{\bsj}}\mu_{\bsj, \bsm}2^{\max\{0,j_1\}+\max\{0,j_2\}}h_{\bsj,\bsm} $$
for some sequence $(\mu_{\bsj,\bsm})$ satisfying
\[  \left(\sum_{\bsj \in \NN_{-1}^2}2^{(j_1+j_2)\left(r-\frac{1}{p}+1\right)q}\left( \sum_{\bsm \in \bbD_{\bsj}}\left| \mu_{\bsj,\bsm}\right|^p\right)^{\frac{q}{p}}\right)^{\frac{1}{q}}<\infty, \]
where the convergence is unconditional in $\calD'([0,1)^2)$ and in any $S_{p,q}^{\rho}B([0,1)^2)$ with $\rho< r$. This representation of $f$ is unique with the  Haar coefficients 
$$ \mu_{\bsj,\bsm}=\mu_{\bsj,\bsm}(f)=\int_{[0,1)^2}f(\bst)h_{\bsj,\bsm}(\bst)\rd \bst. $$
The expression on the left-hand-side of the above inequality provides an equivalent quasi-norm on $S_{p,q}^rB([0,1)^2)$, i.e.
\begin{equation*} \left\|f| S_{p,q}^rB([0,1)^2) \right\| \asymp  \left(\sum_{\bsj \in \NN_{-1}^2}2^{(j_1+j_2)\left(r-\frac{1}{p}+1\right)q}\left( \sum_{\bsm \in \bbD_{\bsj}}\left| \mu_{\bsj,\bsm}\right|^p\right)^{\frac{q}{p}}\right)^{\frac{1}{q}}. \end{equation*}
\end{proposition}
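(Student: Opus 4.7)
The plan is to apply the Haar characterization from Proposition~\ref{equivalencekritzbesov} and then to bound the Haar coefficients of $D_{\widetilde{\calR}_n}$ by exploiting the four-fold symmetry of $\widetilde{\calR}_n$. Since $r<1/p\le 1$ (using $p\ge 1$) and $r>1/p-1$, the proposition applies and it suffices to show
$$\left(\sum_{\bsj\in\NN_{-1}^2}2^{(j_1+j_2)(r-1/p+1)q}\Bigl(\sum_{\bsm\in\bbD_{\bsj}}|\mu_{\bsj,\bsm}(D_{\widetilde{\calR}_n})|^p\Bigr)^{q/p}\right)^{1/q}\lesssim N^{r-1}(\log N)^{1/q}.$$

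First I would express each Haar coefficient of $D_{\widetilde{\calR}_n}$ as an average over reflected copies of $D_{\calR_n}$. Since $D_{\widetilde{\calR}_n}=\tfrac14\sum_{i=0}^3 D_{\calR_{n,i}}$ (with $\calR_{n,0}:=\calR_n$), a change of variables $s\mapsto 1-s$ in the Haar integral combined with the identity $h_{j,m}(1-s)=-h_{j,2^j-1-m}(s)$ for $j\ge 0$ and the $\bsm$-independence of $\int_0^1 t\,h_{j,m}(t)\rd t=-2^{-2j-2}$ yields the key formula
$$\mu_{\bsj,\bsm}(D_{\widetilde{\calR}_n})=\frac{1}{4}\sum_{\boldsymbol{\varepsilon}\in\{0,1\}^2}\mu_{\bsj,\bsm^{\boldsymbol{\varepsilon}}}(D_{\calR_n}),$$
where $m_i^{\varepsilon_i}=m_i$ if $\varepsilon_i=0$ and $m_i^{\varepsilon_i}=2^{j_i}-1-m_i$ if $\varepsilon_i=1$, with the convention that the reflection is trivial when $j_i=-1$. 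Thus each symmetrized coefficient is the average of four ordinary Haar coefficients of $\calR_n$ at reflected dyadic boxes.

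Next I would split $\NN_{-1}^2$ into the fine regime $j_1+j_2>n$ and the coarse regime $j_1+j_2\le n$ and estimate each separately, with the edge cases $j_i=-1$ handled by one-dimensional estimates for the van der Corput projection. In the fine regime the tent function $\alpha_{j,m}(z):=\int_z^1 h_{j,m}(t)\rd t$ satisfies $|\alpha_{j,m}(z)|\lesssim 2^{-j}$, so $|\mu_{\bsj,\bsm}(D_{\widetilde{\calR}_n})|\lesssim 2^{-(j_1+j_2)}/N$, and at most $N$ of the $2^{j_1+j_2}$ dyadic boxes at level $\bsj$ contain points of $\widetilde{\calR}_n$; the outer $\bsj$-sum is then a convergent geometric series governed by $r<1/p$ and contributes order $n\cdot 2^{n(r-1)q}$ at the $q$-th power. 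In the coarse regime, one writes $\mu_{\bsj,\bsm}(D_{\calR_n})$ in closed form in terms of the Hammersley digits of the points of $\calR_n\cap I_{\bsj,\bsm}$; this produces a principal term, whose sign is controlled by the last digits of those points, plus a smaller error. The principal term flips sign under each of the reflections $m_i\mapsto 2^{j_i}-1-m_i$, so it is killed by the four-fold average in the displayed identity above; only the error survives, and its contribution to the Besov sum is controlled thanks to the condition $r>1/p-1$.

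The main obstacle is this refined coarse-regime cancellation: it is exactly at this point that Hinrichs's original analysis of $\calR_n$ breaks down when $r<0$, and recovering the right bound requires a careful bookkeeping of how the involution $m\mapsto 2^j-1-m$ acts on the digits defining the Hammersley construction and of the resulting sign flip in the principal Haar term. Once the improved coefficient bound is in hand, the coarse-regime sum also contributes order $n\cdot 2^{n(r-1)q}$ at the $q$-th power. Adding the two regime contributions, taking $q$-th roots, and identifying $\log N\asymp n$ produces the target bound $N^{r-1}(\log N)^{1/q}$.
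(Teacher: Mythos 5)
Your proposal does not prove the statement it was asked to prove. The statement is Proposition~\ref{equivalencekritzbesov} itself---the Haar characterization of $S_{p,q}^rB([0,1)^2)$---yet your very first step is to \emph{apply} Proposition~\ref{equivalencekritzbesov}, and everything that follows is a sketch of the proof of Theorem~\ref{theo1kritzbesov} (the discrepancy bound for $\widetilde{\calR}_n$), not of the proposition. As an argument for the proposition it is therefore circular. A genuine proof would have to connect the Fourier-analytic definition of $S_{p,q}^rB$ via the dyadic decomposition $(\phi_{\bsk})$ with the Haar system: estimating quantities like $\left\|\calF^{-1}(\phi_{\bsk}\calF h_{\bsj,\bsm})\,|\,L_p(\RR^2)\right\|$, tensorizing a one-dimensional Haar characterization, establishing the unconditional convergence in $\calD'([0,1)^2)$ and in $S_{p,q}^{\rho}B([0,1)^2)$ for $\rho<r$, the uniqueness of the coefficients $\mu_{\bsj,\bsm}(f)$, and the passage from $\RR^2$ to the domain $[0,1)^2$ through the infimum over extensions. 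None of this machinery appears in your text. For the record, the paper does not reprove the proposition either: it is imported verbatim from Triebel's monograph \cite[Theorem~2.41]{Tri10}, which is a legitimate citation, whereas your attempt assumes the result.

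Even read charitably as a blind reconstruction of the proof of Theorem~\ref{theo1kritzbesov}, your sketch fails exactly where the symmetrization matters. Your ``key formula'' $\mu_{\bsj,\bsm}(D_{\widetilde{\calR}_n})=\frac14\sum_{\boldsymbol{\varepsilon}}\mu_{\bsj,\bsm^{\boldsymbol{\varepsilon}}}(D_{\calR_n})$ with ``trivial reflection when $j_i=-1$'' is valid only for $\bsj\in\NN_0^2$, where the vanishing mean of $h_{j,m}$ gives $\int_{1-z}^1 h_{j,m}(t)\rd t=\int_z^1 h_{j,2^j-1-m}(t)\rd t$. For $j_i=-1$ the function $h_{-1,0}$ has no vanishing mean ($\int_{1-z}^1 h_{-1,0}(t)\rd t=z\neq 1-z$), and with your convention the formula would yield $\mu_{(-1,-1),(0,0)}(D_{\widetilde{\calR}_n})=\mu_{(-1,-1),(0,0)}(D_{\calR_n})\neq 0$, contradicting Proposition~\ref{Haarckritzbesov}(vi). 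But the indices with some $j_i=-1$ (cases (iv) and (vi)) are precisely the ones that forced $r\geq 0$ in Hinrichs's analysis and that the four-fold symmetrization annihilates; the paper handles them by a direct computation, pairing $y$ with $1-y$ so that $(1-y)+y=1$ eliminates the second coordinate, then using the reflection $m_1\mapsto 2^{k}-1-m_1$ and Lemma~\ref{formulaskritzbesov} to achieve exact cancellation against the linear-part coefficient $-2^{-(2k+3)}$ from Lemma~\ref{volumekritzbesov}. Moreover, in the coarse regime $j_1+j_2<n-1$ there is no sign-flip cancellation of a ``principal term'' as you describe: the paper shows $T_1=T_2=T_3=T_4$ (equality under reflection, not sign reversal), so that $|\mu_{\bsj,\bsm}|=2^{-2(n+1)}$ exactly; the bounds for the cases already covered by Hinrichs hold on the full range $1/p-1<r<1/p$ without any new cancellation there.
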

 We will follow the same approach as Hinrichs and first estimate the Haar coefficients of $D_{\widetilde{\calR}_n}$ and then apply Proposition~\ref{equivalencekritzbesov}. This note is therefore similar in structure to \cite{hin2010} and uses several results from there.

\section{Proof of Theorem~\ref{theo1kritzbesov}}

To begin with, we state several auxiliary results from \cite[Lemmas 3.2--3.4, 3.6]{hin2010}.
\begin{lemma} \label{volumekritzbesov} Let $f(\bst)=t_1t_2$ for $\bst=(t_1,t_2)\in [0,1)^2$. For $\bsj\in\NN_{-1}^2$ and $\bsm\in\bbD_{\bsj}$ let $\mu_{\bsj,\bsm}$ be the Haar coefficients of $f$. Then
   \begin{itemize}
	     \item[(i)] If $\bsj=(j_1,j_2)\in\NN_0^2$ then $\mu_{\bsj,\bsm}=2^{-2(j_1+j_2+2)}$.
			 \item[(ii)] \hspace{0.5mm} If $\bsj=(-1,k)$ or $\bsj=(k,-1)$ with $k\in\NN_0$ then $\mu_{\bsj,\bsm}=-2^{-(2k+3)}$.
	 \end{itemize}
\end{lemma}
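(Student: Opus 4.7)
The plan is to exploit the tensor-product structure of both the integrand $f(\bst)=t_1t_2$ and the Haar function $h_{\bsj,\bsm}(\bst)=h_{j_1,m_1}(t_1)h_{j_2,m_2}(t_2)$. This immediately factorises the Haar coefficient as
\[
  \mu_{\bsj,\bsm}=\left(\int_0^1 t_1\,h_{j_1,m_1}(t_1)\rd t_1\right)\left(\int_0^1 t_2\,h_{j_2,m_2}(t_2)\rd t_2\right),
\]
so the whole problem reduces to computing the one-dimensional Haar coefficients of the function $g(t)=t$ on $[0,1)$.

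First I would handle the case $j\in\NN_0$. Writing the integral as the difference of integrals over the left half $[m/2^j,(2m+1)/2^{j+1})$ and the right half $[(2m+1)/2^{j+1},(m+1)/2^j)$ of $I_{j,m}$, a direct evaluation using the antiderivative $t^2/2$ gives, after simplification,
\[
  \int_0^1 t\,h_{j,m}(t)\rd t=-\frac{1}{2^{2j+2}},
\]
independently of $m$. For the special level $j=-1$, the function $h_{-1,0}$ is the indicator of $[0,1)$, so $\int_0^1 t\rd t=1/2$.

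Plugging these two values back into the factorised formula yields both assertions of the lemma: in case (i), both factors are of the first type and multiply to $2^{-(2j_1+2)}\cdot 2^{-(2j_2+2)}=2^{-2(j_1+j_2+2)}$; in case (ii), one factor is $1/2$ and the other is $-2^{-(2k+2)}$, giving $-2^{-(2k+3)}$. There is no real obstacle here: the only mildly delicate point is bookkeeping of the constants in the one-dimensional Haar integral on level $j\in\NN_0$, and a short direct calculation settles it.
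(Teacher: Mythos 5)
Your computation is correct: the tensor factorisation of $\mu_{\bsj,\bsm}$ into one-dimensional Haar coefficients of $g(t)=t$, the value $-2^{-(2j+2)}$ for $j\in\NN_0$ (independent of $m$), and the value $1/2$ for the level $j=-1$ all check out and combine to give exactly the stated constants. Note that the paper itself offers no proof of this lemma -- it is quoted verbatim from Hinrichs \cite{hin2010} (Lemmas 3.2--3.4, 3.6) -- so there is nothing to compare against here; your direct verification is the standard argument and is complete.
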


\begin{lemma} \label{countingkritzbesov} Fix $\bsz=(z_1,z_2)\in[0,1)^2$ and let $f(\bst)=\bsone_{[\bszero,\bst)}(\bsz)$ for $\bst=(t_1,t_2)\in [0,1)^2$. For $\bsj\in\NN_{-1}^2$ and $\bsm=(m_1,m_2)\in\bbD_{\bsj}$ let $\mu_{\bsj,\bsm}$ be the Haar coefficients of $f$. Then $\mu_{\bsj,\bsm}=0$ whenever
$\bsz \notin \mathring{I}_{\bsj,\bsm}$, where $\mathring{I}_{\bsj,\bsm}$ denotes the interior of $I_{\bsj,\bsm}$. If $\bsz \in \mathring{I}_{\bsj,\bsm}$ then
   \begin{itemize}
	     \item[(i)] If $\bsj=(j_1,j_2)\in\NN_0^2$ then $$\mu_{\bsj,\bsm}=2^{-(j_1+j_2+2)}(1-|2m_1+1-2^{j_1+1}z_1|)(1-|2m_2+1-2^{j_2+1}z_2|).$$
			 \item[(ii)] \hspace{0.5mm} If $\bsj=(-1,k)$, $k\in\NN_0$, then $\mu_{\bsj,\bsm}=-2^{-(k+1)}(1-z_1)(1-|2m_2+1-2^{k+1}z_2|)$.
			 \item[(iii)] \hspace{1mm} If $\bsj=(k,-1)$, $k\in\NN_0$, then $\mu_{\bsj,\bsm}=-2^{-(k+1)}(1-z_2)(1-|2m_1+1-2^{k+1}z_1|)$.
	 \end{itemize}
\end{lemma}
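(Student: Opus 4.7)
The plan is to exploit the tensor product structure to reduce the two-dimensional Haar coefficient to a product of two univariate integrals, and then evaluate each factor directly. Since $\bsone_{[\bszero,\bst)}(\bsz)=\bsone_{(z_1,1]}(t_1)\,\bsone_{(z_2,1]}(t_2)$ and $h_{\bsj,\bsm}$ is the tensor product $h_{j_1,m_1}(t_1)h_{j_2,m_2}(t_2)$, Fubini's theorem yields
$$\mu_{\bsj,\bsm}=A_{j_1,m_1}(z_1)\cdot A_{j_2,m_2}(z_2),\qquad A_{j,m}(z):=\int_z^1 h_{j,m}(t)\rd t.$$
The whole problem therefore reduces to understanding the single quantity $A_{j,m}(z)$, separately for $j=-1$ and for $j\in\NN_0$.

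For $j=-1$ there is nothing to do beyond $A_{-1,0}(z)=1-z$, since $h_{-1,0}\equiv 1$ on $[0,1)$. For $j\in\NN_0$ I first dispose of the behaviour outside $I_{j,m}$ with two soft observations: if $z\ge(m+1)/2^j$ the integrand already vanishes on $[z,1]$, and if $z\le m/2^j$ the integral equals the full integral $\int_0^1 h_{j,m}=0$. Hence $A_{j,m}(z)=0$ for $z\notin\mathring{I}_{j,m}$. For $z$ in the interior I split according to whether $z$ lies in the left half $[m/2^j,(2m+1)/2^{j+1})$ or the right half $[(2m+1)/2^{j+1},(m+1)/2^j)$ and evaluate two elementary definite integrals. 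In both subcases the answer packages into the single closed form
$$A_{j,m}(z)=-2^{-(j+1)}\bigl(1-|2m+1-2^{j+1}z|\bigr),$$
a downward triangle of height $2^{-(j+1)}$ supported on $I_{j,m}$ with apex at the midpoint $(2m+1)/2^{j+1}$. The cleanest way to confirm this closed form is to check it at the three break-points $m/2^j$, $(2m+1)/2^{j+1}$ and $(m+1)/2^j$ and to note linearity on each half, which avoids carrying absolute values through the case split.

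Multiplying the two univariate factors then gives the three cases of the lemma. In case (i), both coordinates contribute a triangle and the prefactors $-2^{-(j_1+1)}$ and $-2^{-(j_2+1)}$ combine to $+2^{-(j_1+j_2+2)}$, yielding exactly the stated product. In cases (ii) and (iii) exactly one coordinate contributes a triangle, producing a single minus sign, while the other coordinate contributes the affine factor $1-z_i$ coming from $A_{-1,0}$. The vanishing assertion $\mu_{\bsj,\bsm}=0$ for $\bsz\notin\mathring{I}_{\bsj,\bsm}$ is automatic from the fact that $A_{j,m}(z)$ vanishes outside $\mathring{I}_{j,m}$ whenever $j\in\NN_0$ (the $j_i=-1$ coordinate imposes no interior restriction and is absorbed into the $1-z_i$ factor in (ii) and (iii)). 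The only real obstacle is the minor bookkeeping of signs and absolute values when packaging the piecewise expression for $A_{j,m}$ into a single formula, which is precisely the reason for verifying the closed form at its break-points instead of chasing cases through the final product.
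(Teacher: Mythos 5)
Your proof is correct. Note that the paper does not prove this lemma at all --- it imports it verbatim from \cite[Lemmas 3.2--3.4]{hin2010} --- and your argument, namely the tensor factorization $\mu_{\bsj,\bsm}=A_{j_1,m_1}(z_1)A_{j_2,m_2}(z_2)$ with $A_{j,m}(z)=\int_z^1 h_{j,m}(t)\rd t$ followed by the direct evaluation of the univariate triangle $A_{j,m}(z)=-2^{-(j+1)}(1-|2m+1-2^{j+1}z|)$, is precisely the standard computation behind it; your break-point check of the closed form is a clean way to avoid the case chase. One small point in your favour: your parenthetical reading of the vanishing claim (the coordinate with $j_i=-1$ imposes no interior restriction) is the intended one, since taken literally with the ambient interior of $I_{(-1,k),\bsm}=[0,1)\times I_{k,m_2}$ the blanket statement ``$\mu_{\bsj,\bsm}=0$ whenever $\bsz\notin\mathring{I}_{\bsj,\bsm}$'' would misfire at $z_1=0$, where your factorization correctly yields $A_{-1,0}(0)=1$ and hence a generally nonzero coefficient.
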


\begin{lemma} \label{formulaskritzbesov} Let $\calR_n$ be a Hammersley type point set with $2^n$ points. Let $\bsj=(j_1,j_2)\in\NN_0^2$ and $\bsm=(m_1,m_2)\in\bbD_{\bsj}$.
 Then, if $j_1+j_2<n$,
  $$ \sum_{\bsz \in \calR_n \cap \mathring{I}_{\bsj,\bsm}}(1-|2m_1+1-2^{j_1+1}z_1|)=\sum_{\bsz \in \calR_n \cap \mathring{I}_{\bsj,\bsm}}(1-|2m_2+1-2^{j_2+1}z_2|)=2^{n-j_1-j_2-1} $$
	and, if $j_1+j_2<n-1$,
	$$  \sum_{\bsz \in \calR_n \cap \mathring{I}_{\bsj,\bsm}}(1-|2m_1+1-2^{j_1+1}z_1|)(1-|2m_2+1-2^{j_2+1}z_2|)=2^{n-j_1-j_2-2}+2^{j_1+j_2-n}. $$
\end{lemma}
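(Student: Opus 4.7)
The plan is to parametrize the $2^{n-j_1-j_2}$ points of $\calR_n\cap \mathring{I}_{\bsj,\bsm}$ by their free binary digits and then evaluate both sums by direct expansion, exploiting the digital-net structure of the Hammersley construction. Each $\bsz\in\calR_n$ corresponds to a binary string $(t_1,\dots,t_n)\in\{0,1\}^n$, and the conditions $z_1\in I_{j_1,m_1}$ and $z_2\in I_{j_2,m_2}$ force respectively the digits $t_n,\dots,t_{n-j_1+1}$ and $t_1,\dots,t_{j_2}$ (the latter via $s_i=t_i$ or $1-t_i$) to take prescribed values; under $j_1+j_2<n$ this leaves $N:=n-j_1-j_2$ free digits $b_l:=t_{j_2+l}$, $l=1,\dots,N$. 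A short calculation yields
$$\theta_1:=2^{j_1+1}(z_1-m_1/2^{j_1})=b_N+\tfrac{b_{N-1}}{2}+\dots+\tfrac{b_1}{2^{N-1}}+\beta_1,$$
with $\theta_2:=2^{j_2+1}(z_2-m_2/2^{j_2})$ analogous but with the $b_l$'s in \emph{reverse} order (possibly with some flipped) and a different fixed offset $\beta_2\in[0,2^{1-N})$. This bit reversal is the defining feature of the Hammersley net and will drive everything below. The interior restriction is harmless, since each $a_i:=1-|2m_i+1-2^{j_i+1}z_i|$ vanishes on the edge $\{z_i=m_i/2^{j_i}\}$.

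For the first identity, as $b$ runs through $\{0,1\}^N$ the value $\theta_1$ takes the $2^N$ equally spaced values $v/2^{N-1}+\beta_1$, $v=0,1,\dots,2^N-1$, all lying in $[0,2)$. Splitting according to $\theta_1\le 1$ (where $a_1=\theta_1$) and $\theta_1\ge 1$ (where $a_1=2-\theta_1$) reduces everything to two arithmetic-progression sums in which the $\beta_1$-dependent parts cancel exactly, and one obtains $\sum_b a_1=2^{N-1}=2^{n-j_1-j_2-1}$ independently of $\beta_1$. The same argument with the coordinate roles swapped yields the companion identity for $a_2$.

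For the product identity I would decompose $a_1=b_N+(-1)^{b_N}Y_1$ with $Y_1=\sum_{l=1}^{N-1}b_l/2^{N-l}+\beta_1$ and, analogously, $a_2=c_1+(-1)^{c_1}Y_2$ with $c_l\in\{b_l,1-b_l\}$ and $Y_2=\sum_{l=2}^{N}c_l/2^{l-1}+\beta_2$. Expanding into four pieces and summing over $b\in\{0,1\}^N$, every cross-term that pairs $(-1)^{b_N}$ with a factor independent of $b_N$, or $(-1)^{c_1}$ with a factor independent of $b_1$, vanishes through $\sum_{b_i\in\{0,1\}}(-1)^{b_i}=0$. After these cancellations only the constant piece $\sum_b b_Nc_1=2^{N-2}$ survives, together with a solitary residual of size $2^{-N}$ contributed by the pair $(b_1,c_N)$---the two most widely separated free bits, whose coupling is precisely the Hammersley bit reversal. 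Combining the two contributions gives $2^{N-2}+2^{-N}=2^{n-j_1-j_2-2}+2^{j_1+j_2-n}$. The product identity is clearly the main obstacle: $a_1$ and $a_2$ are globally correlated through the reversal, and one must carefully isolate the lone surviving $(b_1,c_N)$-coupling out of a large number of cross-terms that conspire to cancel.
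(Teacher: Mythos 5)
You should first note that the paper contains no proof of this lemma at all: it is imported verbatim from \cite[Lemmas 3.2--3.4, 3.6]{hin2010}, so the comparison is with the standard digit-counting argument, which is exactly the route you take. Your setup is correct (the forced digits, the $N=n-j_1-j_2$ free bits, the bit-reversed expansions $\theta_i$ with offsets $\beta_i\in[0,2^{1-N})$, and $a_i=1-|1-\theta_i|$), and your evaluation of the single sums is right \emph{as a computation over the full half-open box} $I_{\bsj,\bsm}$: the arithmetic-progression split does give $2^{N-1}$ with the $\beta_1$-terms cancelling. But your claim that ``the interior restriction is harmless'' only covers the same-coordinate edge. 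For $\sum a_1$, a point with $z_2=m_2 2^{-j_2}$ exactly and $z_1$ interior is excluded from $\mathring{I}_{\bsj,\bsm}$ yet has $a_1\neq 0$ in general; at most one such point exists (its condition forces all digits), and it does occur: for the plain Hammersley set ($s_i=t_i$) with $n=2$, $\bsj=(0,1)$, $\bsm=(0,1)$, the interior sum is $1/2$ while the half-open sum is $1=2^{n-j_1-j_2-1}$. So what you actually prove is the half-open version --- which is the version that is true and the one the Haar-coefficient computations need --- while the $\mathring{I}$-statement is a boundary-convention slip inherited from the source; you should flag this rather than dismiss it.

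The substantive gap is the sign of the residual in the product identity. Your cancellation analysis correctly isolates the survivors as $\sum_b b_Nc_1=2^{N-2}$ and the coupling $\sum_{b}(-1)^{b_N+c_1}b_1c_N 2^{2(1-N)}$, but the latter equals $\varepsilon\, 2^{-N}$ with $\varepsilon=\pm 1$, not $+2^{-N}$ unconditionally: since $\sum_{b_1}(-1)^{c_1}b_1$ equals $-1$ if $c_1=b_1$ and $+1$ if $c_1=1-b_1$, and likewise $\sum_{b_N}(-1)^{b_N}c_N=\mp 1$, one gets $\varepsilon=+1$ precisely when the flips at digit positions $j_2+1$ and $n-j_1$ are of the same type, and $\varepsilon=-1$ in the mixed case. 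You assert the plus sign without tracking this, and in fact no argument can rescue it in the stated generality: take $n=2$, $s_1=t_1$, $s_2=1-t_2$, $\bsj=(0,0)$, $\bsm=(0,0)$; the four points are $(0,1/4)$, $(1/4,3/4)$, $(1/2,0)$, $(3/4,1/2)$, and the product sum is $0+\tfrac14+0+\tfrac12=\tfrac34=2^{N-2}-2^{-N}$, not $\tfrac54$. So your method, carried out honestly, proves $2^{n-j_1-j_2-2}\pm 2^{j_1+j_2-n}$ with a flip-dependent sign. This would still suffice for the paper, since the sign is the same for all $\bsm$ at fixed $\bsj$ (it depends only on the flip pattern), so $T_1=T_2=T_3=T_4$ and Proposition~\ref{Haarckritzbesov} survives with $|\mu_{\bsj,\bsm}|=2^{-2(n+1)}$ --- but as a proof of the lemma with a uniform plus, your proposal fails at exactly the step you yourself identified as the main obstacle.
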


Now we are ready to compute the Haar coefficients of $D_{\widetilde{\calR}_n}$.

\begin{proposition}  \label{Haarckritzbesov}
Let $\widetilde{\calR}_n$ be a symmetrized Hammersley type point set with $N=2^{n+2}$ elements and let $f$ be the local discrepancy
of $\widetilde{\calR}_n$ and $\mu_{\bsj,\bsm}$ the Haar coefficients of $f$ for $\bsj\in\NN_{-1}^2$ and $\bsm=(m_1,m_2)\in\bbD_{\bsj}$.

Let $\bsj=(j_1,j_2)\in \NN_{0}^2$. Then
 \begin{itemize}
  \item[(i)] if $j_1+j_2<n-1$ and $j_1,j_2\ge 0$ then $|\mu_{\bsj,\bsm}| = 2^{-2(n+1)}$.
  \item[(ii)] \hspace{0.5mm} if $j_1+j_2\ge n-1$ and $0\le j_1,j_2\le n$ then $|\mu_{\bsj,\bsm}| \le 2^{-(n+j_1+j_2)}$ and
     $|\mu_{\bsj,\bsm}| = 2^{-2(j_1+j_2+2)}$ for all but at most $2^{n+2}$ coefficients $\mu_{\bsj,\bsm}$ with $\bsm\in {\mathbb D}_{\bsj}$.
  \item[(iii)] \hspace{1mm} if $j_1 \ge n$ or $j_2 \ge n$ then $|\mu_{\bsj,\bsm}| = 2^{-2(j_1+j_2+2)}$.
 \end{itemize} 
 
 Now let $\bsj=(-1,k)$ or $\bsj=(k,-1)$ with $k\in \NN_0$. Then
 \begin{itemize}
  \item[(iv)] \hspace{0.5mm} if $k<n$ then $\mu_{\bsj,\bsm}=0$.
  \item[(v)] if $k\ge n$  then $|\mu_{\bsj,\bsm}| = 2^{-(2k+3)}$.
 \end{itemize}
 
 Finally, 
  \begin{itemize}
   \item[(vi)] \hspace{0.5mm} $\mu_{(-1,-1),(0,0)} = 0$.
  \end{itemize} 
\end{proposition}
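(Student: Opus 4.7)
The plan is to use linearity of the Haar coefficient map and split
$$\mu_{\bsj,\bsm}(f)=\frac{1}{N}\sum_{\bsz\in\widetilde{\calR}_n}\mu_{\bsj,\bsm}\bigl(\bsone_{[\bszero,\cdot)}(\bsz)\bigr)-\mu_{\bsj,\bsm}(t_1t_2),$$
evaluating the first summand via Lemma~\ref{countingkritzbesov} and the second via Lemma~\ref{volumekritzbesov}. The central observation is that each $(x,y)\in\calR_n$ generates four points of $\widetilde{\calR}_n$ under the reflections $z_i\mapsto 1-z_i$, and inside the weight $1-|2m_i+1-2^{j_i+1}z_i|$ the substitution $z_i\mapsto 1-z_i$ amounts exactly to the index change $m_i\mapsto 2^{j_i}-1-m_i$. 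This reduces the four resulting partial sums to direct applications of Lemma~\ref{formulaskritzbesov}.

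For case~(i) this yields the counting total $4(2^{n-j_1-j_2-2}+2^{j_1+j_2-n})$; after multiplying by $\frac{1}{N}2^{-(j_1+j_2+2)}$, the term $2^{-2(j_1+j_2+2)}$ cancels exactly against Lemma~\ref{volumekritzbesov}(i), leaving $|\mu_{\bsj,\bsm}|=2^{-2(n+1)}$. For case~(ii), any empty box gives $|\mu_{\bsj,\bsm}|=2^{-2(j_1+j_2+2)}$ from the volume term alone, and there are at most $N=2^{n+2}$ non-empty boxes at level~$\bsj$, which handles the exceptional count. For the non-empty boxes, the $(0,n,2)$-net property of $\calR_n$ bounds $|\mathring{I}_{\bsj,\bsm}\cap\widetilde{\calR}_n|$ by $8$ when $j_1+j_2=n-1$ and by $4$ otherwise, so estimating $(1-|\cdots|)\le 1$ yields $|\mu_{\bsj,\bsm}|\le 2^{-(n+j_1+j_2)}$. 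For case~(iii), every coordinate of every point of $\widetilde{\calR}_n$ lies in $\{\ell/2^n:0\le\ell\le 2^n\}$, and for $j_i\ge n$ such grid points fall on the boundary of $I_{j_i,m_i}$; hence $\mathring{I}_{\bsj,\bsm}\cap\widetilde{\calR}_n=\emptyset$ and only the volume part survives.

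Case~(iv) is the substantive point. Since $h_{(-1,k),(0,m_2)}(\bst)=h_{k,m_2}(t_2)$, Fubini gives
$$\mu_{(-1,k),(0,m_2)}(f)=\int_0^1 F(t_2)\,h_{k,m_2}(t_2)\rd t_2,\qquad F(t_2):=\int_0^1 f(t_1,t_2)\rd t_1.$$
Applying the $z_1\mapsto 1-z_1$ symmetry of $\widetilde{\calR}_n$ inside the integral yields
$$F(t_2)=\frac{1}{2N}A(t_2)-\frac{t_2}{2},\qquad A(t_2):=\#\bigl\{\bsz\in\widetilde{\calR}_n:z_2<t_2\bigr\}.$$
The $z_2$-coordinates of $\widetilde{\calR}_n$ are supported on $\{\ell/2^n\}_{0\le\ell\le 2^n}$, and a short count using the $z_2\mapsto 1-z_2$ symmetry together with the bijection between the $y$-values of $\calR_n$ and $\{0,\ldots,2^n-1\}/2^n$ shows that every interior grid point $\ell/2^n$ ($1\le\ell\le 2^n-1$) is hit $4$ times, while $0$ and $1$ are hit $2$ times each. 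Consequently $F$ is linear of slope $-\frac{1}{2}$ on each subinterval $(\ell/2^n,(\ell+1)/2^n)$ and vanishes at its midpoint, so it is antisymmetric about that midpoint. For $k<n$, the support $I_{k,m_2}$ decomposes into $2^{n-k}$ such subintervals, each contributing $0$ to $\int F\,h_{k,m_2}$; hence $\mu_{(-1,k),(0,m_2)}(f)=0$. The case $\bsj=(k,-1)$ is entirely analogous.

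For~(v), the same grid argument as in~(iii) shows $\mathring{I}_{(-1,k),\bsm}\cap\widetilde{\calR}_n=\emptyset$ when $k\ge n$, so only the volume term from Lemma~\ref{volumekritzbesov}(ii) survives. Finally, for~(vi), $h_{(-1,-1),(0,0)}\equiv 1$, so
$$\mu_{(-1,-1),(0,0)}(f)=\frac{1}{N}\sum_{\bsz\in\widetilde{\calR}_n}(1-z_1)(1-z_2)-\frac{1}{4};$$
the two reflection symmetries of $\widetilde{\calR}_n$ applied successively give $\sum(1-z_1)(1-z_2)=N/4$, and the two contributions cancel. The main obstacle is case~(iv): one has to isolate the precise multiplicity structure of the $z_2$-values in $\widetilde{\calR}_n$ and then verify the midpoint-vanishing of $F$ on every dyadic subinterval of length $2^{-n}$; this is exactly the cancellation the 4-fold symmetrization was designed to produce, and it is what distinguishes $\widetilde{\calR}_n$ from $\calR_n$ in the negative smoothness range.
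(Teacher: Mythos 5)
Your proposal is correct, and for five of the six cases it follows essentially the paper's own route: linearity splits $\mu_{\bsj,\bsm}$ into the counting part (Lemma~\ref{countingkritzbesov}) and the volume part (Lemma~\ref{volumekritzbesov}), the four reflected copies are handled via the index change $m_i\mapsto 2^{j_i}-1-m_i$, Lemma~\ref{formulaskritzbesov} gives $T=4(2^{n-j_1-j_2-2}+2^{j_1+j_2-n})$ in case~(i), the net property plus the triangle inequality gives case~(ii) (your sharper count of $4$ points for $j_1+j_2\ge n$ is correct but unnecessary; the paper just uses $8$ throughout), and the grid argument settles (iii), (v), (vi). Where you genuinely diverge is case~(iv), which is the decisive new case relative to Hinrichs. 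The paper stays two-dimensional: it collapses the $y$-weights via $(1-y)+y=1$, reducing $S$ to $S_1+S_2$, and evaluates each by Lemma~\ref{formulaskritzbesov} after the substitution $m_1\mapsto\widetilde m_1=2^k-1-m_1$, so that $-\tfrac1N 2^{-(k+1)}\cdot 2^{n-k}$ cancels $-2^{-(2k+3)}$. You instead integrate out the free coordinate, arrive at the marginal $F(t_2)=\tfrac{1}{2N}A(t_2)-\tfrac{t_2}{2}$, and exploit the exact multiplicity profile $(2,4,\dots,4,2)$ of the second coordinates of $\widetilde{\calR}_n$ to show $F$ is linear of slope $-\tfrac12$ and odd about the midpoint of each cell of length $2^{-n}$, hence orthogonal to every $h_{k,m_2}$ with $k<n$. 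Both arguments are valid (I checked the multiplicity count: $2+2+4(2^n-1)=2^{n+2}$, and the midpoint vanishing $\tfrac{2\ell+1}{2^{n+2}}-\tfrac{2\ell+1}{2^{n+2}}=0$); the paper's version recycles the existing two-dimensional counting lemma uniformly across all cases, while yours makes the cancellation mechanism of the symmetrization more transparent and shows that case~(iv) really only requires the uniformity of the one-dimensional projections, not the full strength of Lemma~\ref{formulaskritzbesov}.
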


\begin{proof} The cases $(iii)$ and $(v)$ follow from the fact that no elements of $\widetilde{\calR}_n$ are contained in the interior of a dyadic box $I_{(j_1,j_2),\bsm}$ if $j_1\geq n$ or $j_2\geq n$, together with Lemma~\ref{volumekritzbesov}. We consider the case $(ii)$. For a fixed $\bsj=(j_1,j_2)$ the interiors of the dyadic boxes $I_{\bsj,\bsm}$ for $\bsm\in\bbD_{\bsj}$ are mutually
disjunct and at most $2^{n+2}$ of these boxes can contain points from $\widetilde{\calR}_n$. We have $\mu_{\bsj,\bsm}=2^{-2(j_1+j_2+2)}$ if the corresponding box $I_{\bsj,\bsm}$ is empty. The other boxes contain at most $8$ points (because the volume of $I_{\bsj,\bsm}$ is at most $2^{-(n-1)}$ due to the condition $j_1+j_2\geq n-1$ and because of the net property of $\calR_n$ and its connected point sets). Together with the first part of Lemma~\ref{countingkritzbesov} and the triangle inequality this
yields $|\mu_{\bsj,\bsm}|\leq 8\cdot 2^{-(n+2)}2^{-(j_1+j_2+2)}+2^{-2(j_1+j_2+2)} \leq 2^{-(n+j_1+j_2)}$.

The case $(vi)$ can be seen as follows:
\begin{align*}
   \mu_{(-1,-1),(0,0)}=&\int_{0}^{1}\int_{0}^{1}D_{\widetilde{\calR}_n}(t_1,t_2)\rd t_1\rd t_2 = \frac{1}{N}\sum_{\bsz \in \widetilde{\calR}_n} \int_{z_1}^{1}\int_{z_2}^{1}1\rd t_1\rd t_2-\int_{0}^{1}\int_{0}^{1}t_1t_2\rd t_1\rd t_2 \\
	=&\frac{1}{2^{n+2}}\sum_{\bsz \in \widetilde{\calR}_n}(1-z_1)(1-z_2)-\frac{1}{4} \\
	=&\frac{1}{2^{n+2}}\sum_{(x,y) \in \calR_n}[(1-x)(1-y)+(1-x)y+x(1-y)+xy]-\frac{1}{4}\\
	=&\frac{1}{2^{n+2}}\sum_{(x,y) \in \calR_n}1-\frac{1}{4}=\frac{1}{2^{n+2}}2^n-\frac14=0.
\end{align*}
To show the claim in $(iv)$ for the case $\bsj=(k,-1)$ with $k\in\NN_0$, $k<n$, we have to consider the expression
  $$ S:=\sum_{\bsz \in \widetilde{\calR}_n \cap \mathring{I}_{(k,-1),(m_1,0)}}(1-|2m_1+1-2^{k+1}z_1|)(1-z_2) $$
	for any $m_1\in\{0,\dots,2^k-1\}$. We can write
	\begin{align*}
	    S=&\sum_{(x,y) \in \calR_n \cap \mathring{I}_{(k,-1),(m_1,0)}}(1-|2m_1+1-2^{k+1}x|)(1-y) \\
	      &+\sum_{(x,1-y) \in \calR_n \cap \mathring{I}_{(k,-1),(m_1,0)}}(1-|2m_1+1-2^{k+1}x|)y \\
				&+\sum_{(1-x,y) \in \calR_n \cap \mathring{I}_{(k,-1),(m_1,0)}}(1-|2m_1+1-2^{k+1}(1-x)|)(1-y) \\
				&+\sum_{(1-x,1-y) \in \calR_n \cap \mathring{I}_{(k,-1),(m_1,0)}}(1-|2m_1+1-2^{k+1}(1-x)|)y \\
				=&\sum_{(x,y) \in \calR_n \cap \mathring{I}_{(k,-1),(m_1,0)}}(1-|2m_1+1-2^{k+1}x|) \\
				&+\sum_{(1-x,y) \in \calR_n \cap \mathring{I}_{(k,-1),(m_1,0)}}(1-|2m_1+1-2^{k+1}(1-x)|)=:S_1+S_2,
	\end{align*}
	where we used the obvious equivalences $(x,y) \in \calR_n \cap \mathring{I}_{(k,-1),(m_1,0)}$ if and only if $(x,1-y) \in \calR_n \cap \mathring{I}_{(k,-1),(m_1,0)}$ as well as $(1-x,y) \in \calR_n \cap \mathring{I}_{(k,-1),(m_1,0)}$ if and only if $(1-x,1-y) \in \calR_n \cap \mathring{I}_{(k,-1),(m_1,0)}$ in the last step.
	Since the interval $\mathring{I}_{(k,-1),(m_1,0)}$ is the same as $\mathring{I}_{(k,0),(m_1,0)}$, we obtain 
	$S_1=2^{n-k-1}$ from the first part of Lemma~\ref{formulaskritzbesov}. To evaluate $S_2$ we observe
	that 
	$$ 1-x \in \mathring{I}_{k,m_1} \Leftrightarrow \frac{m_1}{2^k} < 1-x < \frac{m_1+1}{2^k} 
	\Leftrightarrow \frac{2^k-1-m_1}{2^k} < x < \frac{2^k-m_1}{2^k} \Leftrightarrow x \in \mathring{I}_{k,\widetilde{m}_1}, $$
	where we set $\widetilde{m}_1=2^k-1-m_1$. This yields the equivalence of $(1-x,y) \in \calR_n \cap \mathring{I}_{(k,-1),(m_1,0)}$ and $(x,y) \in \calR_n \cap \mathring{I}_{(k,-1),(\widetilde{m}_1,0)}$. We also find
	\begin{align*} |2m_1+1-2^{k+1}(1-x)|=&|2(m_1+1-2^k)-1+2^{k+1}x|\\ =&|-2\widetilde{m}_1-1+2^{k+1}x|=|2\widetilde{m}_1+1-2^{k+1}x|
	\end{align*}
	and hence we obtain
	\begin{align*} S_2=&\sum_{(x,y) \in \calR_n \cap \mathring{I}_{(k,-1),(\widetilde{m}_1,0)}}(1-|2\widetilde{m}_1+1-2^{k+1}x|)=2^{n-k-1},\end{align*}
	where we regarded the first part of Lemma~\ref{formulaskritzbesov} again. Altogether, we have
	\begin{align*}
	   \mu_{(k,-1),(m_1,0)}=&-\frac{1}{N}2^{-(k+1)}(S_1+S_2)-(-2^{-(2k+3)}) \\
		                     =&-2^{-(n+2)}2^{-(k+1)}2^{n-k}+2^{-(2k+3)}=0
	\end{align*}
	with Lemmas~\ref{volumekritzbesov} and~\ref{countingkritzbesov},
	and this part of the proposition is verified. It is clear that the result for $\mu_{(-1,k),(0,m_2)}$ if $k<n$ can be shown analogously.
	
	Finally, we prove $(i)$ and therefore have to analyze the sum
$$ T:=\sum_{\bsz\in\widetilde{\calR}_n \cap \mathring{I}_{\bsj,\bsm}}(1-|2m_1+1-2^{j_1+1}z_1|)(1-|2m_2+1-2^{j_2+1}z_2|),$$
where $\bsj=(j_1,j_2)\in\NN_{0}^2$ with $j_1+j_2<n-1$. We have
\begin{align*}
   T=&\sum_{(x,y)\in\calR_n \cap \mathring{I}_{\bsj,\bsm}}(1-|2m_1+1-2^{j_1+1}x|)(1-|2m_2+1-2^{j_2+1}y|) \\
	  &+\sum_{(x,1-y)\in\calR_n \cap \mathring{I}_{\bsj,\bsm}}(1-|2m_1+1-2^{j_1+1}x|)(1-|2m_2+1-2^{j_2+1}(1-y)|) \\
		&+\sum_{(1-x,y)\in\calR_n \cap \mathring{I}_{\bsj,\bsm}}(1-|2m_1+1-2^{j_1+1}(1-x)|)(1-|2m_2+1-2^{j_2+1}y|) \\
		&+\sum_{(1-x,1-y)\in\calR_n \cap \mathring{I}_{\bsj,\bsm}}(1-|2m_1+1-2^{j_1+1}(1-x)|)(1-|2m_2+1-2^{j_2+1}(1-y)|) \\
	  =:&T_1+T_2+T_3+T_4. 
\end{align*}
We obtain directly from the second part of Lemma~\ref{formulaskritzbesov} that $T_1=2^{n-j_1-j_2-2}+2^{j_1+j_2-n}$.
With the same arguments as in the proof of $(iv)$ we can show
\begin{eqnarray*}
T_2&=&\sum_{(x,y)\in\calR_n \cap \mathring{I}_{\bsj,(m_1,\widetilde{m}_2)}}(1-|2m_1+1-2^{j_1+1}x|)(1-|2\widetilde{m}_2+1-2^{j_2+1}y|), \\
T_3&=&\sum_{(x,y)\in\calR_n \cap \mathring{I}_{\bsj,(\widetilde{m}_1,m_2)}}(1-|2\widetilde{m}_1+1-2^{j_1+1}x|)(1-|2m_2+1-2^{j_2+1}y|), \\
T_4&=&\sum_{(x,y)\in\calR_n \cap \mathring{I}_{\bsj,(\widetilde{m}_1,\widetilde{m}_2)}}(1-|2\widetilde{m}_1+1-2^{j_1+1}x|)(1-|2\widetilde{m}_2+1-2^{j_2+1}y|),
\end{eqnarray*}
where $\widetilde{m}_i=2^{j_i}-1-m_i$ for $i\in\{1,2\}$. But from this and Lemma~\ref{formulaskritzbesov} we see that $T_2=T_3=T_4=T_1$ and
together with Lemma~\ref{volumekritzbesov} and Lemma~\ref{countingkritzbesov}
\begin{align*}
   \mu_{\bsj,\bsm}=&\frac{1}{N}2^{-(j_1+j_2+2)}(T_1+T_2+T_3+T_4)-2^{-2(j_1+j_2+2)} \\
	                =&2^{-(n+2)}2^{-(j_1+j_2+2)}(2^{n-j_1-j_2}+2^{j_1+j_2-n+2})-2^{-2(j_1+j_2+2)}=2^{-2(n+1)}
\end{align*}
as claimed. The proof of the proposition is complete.
\end{proof}

\vspace{5mm}

Now we are able to prove Theorem~\ref{theo1kritzbesov}.

\vspace{3mm}

\begin{proof} We consider any symmetrized Hammersley type point set $\widetilde{\calR}_n$ (we do not have to specify the dependence of the
$s_i$ on $t_i$ in the definition of $\calR_n$). For $\bsj\in\NN_{-1}^2$ and $\bsm\in\bbD_{\bsj}$ let $\mu_{\bsj,\bsm}$ be the Haar coefficients of the local discrepancy of $\widetilde{\calR}_n$. According to Proposition~\ref{equivalencekritzbesov}, it suffices to show that for all $p,q,r$ satisfying the conditions in Theorem~\ref{theo1kritzbesov} we have
	\begin{equation} \label{sufficekritzbesov} \left(\sum_{\bsj \in \NN_{-1}^2}2^{(j_1+j_2)\left(r-\frac{1}{p}+1\right)q}\left( \sum_{\bsm \in \bbD_{\bsj}}\left| \mu_{\bsj,\bsm}\right|^p\right)^{\frac{q}{p}}\right)^{\frac{1}{q}}\lesssim 2^{n(r-1)}n^{1/q}. \end{equation}
	This yields
	$$ \|D_{\widetilde{\calR}_n}|S_{p,q}^rB([0,1)^2)\|\lesssim 2^{-2(r-1)}2^{(n+2)(r-1)}(n+2)^{1/q}\lesssim N^{r-1}(\log{N})^{1/q}. $$
	 To verify \eqref{sufficekritzbesov}, we split the sum over $\bsj$ in six cases
	according to Proposition~\ref{Haarckritzbesov} (and thereby applying Minkowski's inequality). We remark that the cases $(i)$, $(ii)$, $(iii)$ and $(v)$ have already been treated
	in \cite[Section 4]{hin2010}, since in these cases the bounds on the Haar coefficients of $D_{\calR_n}$ are (basically) the same
	as those for the Haar coefficients of $D_{\widetilde{\calR}_n}$. In all cases Hinrichs obtained an upper bound of the form $c2^{n(r-1)}n^{1/q}$ with $c$ independent of $n$ for the whole parameter range $1/p-1<r<1/p$. The only cases where the condition $r\geq 0$ was necessary were $(iv)$ and $(vi)$. However, the symmetrization of $\calR_n$ has the effect that the corresponding Haar coefficients of $D_{\widetilde{\calR}_n}$ vanish in these two cases, and the result follows. 
\end{proof}

\begin{remark} Let $f$ be the local discrepancy of the point set $\calR_n\cup \calR_{n,1}$ and $\mu_{\bsj,\bsm}$ for $\bsj\in\NN_{-1}^2$ and $\bsm\in\bbD_{\bsj}$ be the
corresponding Haar coefficients. Then one can show that $\mu_{(-1,-1),(0,0)}=2^{-(n+2)}$ and $\mu_{(-1,k),(0,m_2)}=-2^{-(n+2k+3)}+2^{-(2n+2)}T_k$ for $k\in\NN_0$, $k<n$. Here, $T_k=1$ if $s_{k+1}=t_{k+1}$ and $T_k=-1$ if $s_{k+1}=1-t_{k+1}$ in the definition of $\calR_n$. Hence, the proof of Theorem~\ref{theo1kritzbesov} does not work for this class of point sets.
\end{remark}

\section{Discrepancy in further function spaces and numerical integration}

As pointed out in \cite{Mar2013,Mar2013b,Tri10} one can easily deduce results on the discrepancy of point sets
in Triebel-Lizorkin spaces from the discrepancy estimates in Besov spaces. Let $0<p< \infty$, $0<q\leq \infty$ and $r\in\RR$. The Triebel-Lizorkin space $S_{p,q}^rF(\RR^2)$ with dominating mixed smoothness consists of all $f\in \calS'(\RR^2)$ with finite quasi-norm
$$ \|f|S_{p,q}^rF(\RR^2)\|=\left\|\left(\sum_{\bsk\in\NN_0^2}2^{r(k_1+k_2)q}|\calF^{-1}(\Phi_{\bsk}\calF f)(\cdot)|^q\right)^{1/q}|L_p(\RR^2)\right\| $$
with the usual modification if $q=\infty$. The space $S_{p,q}^rF([0,1)^2)$ can be introduced analogously to $S_{p,q}^rB([0,1)^2)$.
For $0<p,q<\infty$ and $r\in\RR$ we have the embeddings
\begin{equation} \label{embeddingskritzbesov} S_{\max\{p,q\},q}^rB([0,1)^2)\hookrightarrow S_{p,q}^rF([0,1)^2) \hookrightarrow S_{\min\{p,q\},q}^rB([0,1)^2), \end{equation}
which were proven in \cite[Corollary 1.13]{Mar2013b}, based on other embedding theorems from \cite[Remark 6.28]{Tri10} and \cite[Proposition 2.3.7]{han}. From the first embedding together with Theorem~\ref{theo1kritzbesov} we obtain
\begin{corollary} \label{lizorkinkritzbesov} Let $1\leq p,q < \infty$ and $\frac{1}{\max\{p,q\}}-1<r<\frac{1}{\max\{p,q\}}$. Then the point sets $\widetilde{\calR}_n$ in $[0,1)^2$ with $N=2^{n+2}$ elements satisfy
$$ \|D_{\calP}|S_{p,q}^rF([0,1)^2)\|\lesssim N^{r-1}(\log{N})^{1/q}.  $$
\end{corollary}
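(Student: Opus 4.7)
The proof will be essentially a one-line application of the tools already collected: the left-hand embedding in \eqref{embeddingskritzbesov} together with Theorem~\ref{theo1kritzbesov} applied with a relabelled integrability index. The plan is to first replace the target $F$-space by the larger $B$-space that sits to the left of it in the embedding chain, and then quote the $B$-space bound.

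More precisely, I would argue as follows. Set $P := \max\{p,q\}$. Under the hypothesis $1 \leq p,q < \infty$ we have $1 \leq P < \infty$, and the smoothness condition $\tfrac{1}{P}-1 < r < \tfrac{1}{P}$ is exactly the condition needed to feed $(P,q,r)$ into Theorem~\ref{theo1kritzbesov}. The embedding $S_{P,q}^r B([0,1)^2) \hookrightarrow S_{p,q}^r F([0,1)^2)$ from \eqref{embeddingskritzbesov} then yields
\[
\|D_{\widetilde{\calR}_n} | S_{p,q}^r F([0,1)^2)\| \;\lesssim\; \|D_{\widetilde{\calR}_n} | S_{P,q}^r B([0,1)^2)\|,
\]
with an implied constant depending only on $p,q,r$ (from the embedding), and Theorem~\ref{theo1kritzbesov} applied with integrability parameter $P$ bounds the right-hand side by $N^{r-1}(\log N)^{1/q}$.

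There is no real obstacle here beyond the bookkeeping of parameters; the one point that requires a moment's attention is checking that the smoothness range assumed in the corollary, $\tfrac{1}{\max\{p,q\}}-1<r<\tfrac{1}{\max\{p,q\}}$, is precisely what Theorem~\ref{theo1kritzbesov} needs once $p$ is replaced by $P=\max\{p,q\}$. Since the logarithmic exponent $1/q$ in the bound of Theorem~\ref{theo1kritzbesov} only depends on $q$ (not on the integrability parameter), it passes through unchanged, giving exactly the rate claimed in Corollary~\ref{lizorkinkritzbesov}.
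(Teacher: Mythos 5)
Your proposal is correct and is exactly the argument the paper intends: the corollary is stated as an immediate consequence of the first embedding in \eqref{embeddingskritzbesov} applied with integrability index $\max\{p,q\}$ together with Theorem~\ref{theo1kritzbesov}, which is precisely your parameter bookkeeping. Nothing is missing.
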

This corollary improves on \cite[Theorem 6.1]{Mar2013}, where Hammersley type point sets in arbitrary base $b\geq 2$ have been considered, by extending again the range of $r$ to negative values. There exist corresponding lower bounds for the norm of the local discrepancy in Triebel-Lizorkin spaces for $\frac{1}{\min\{p,q\}}-1<r<\frac{1}{p}$ as shown in \cite[Corollary 4.2]{Mar2013b}. This follows from the lower bounds on the discrepancy in Besov spaces as stated in the introduction, together with the second embedding in \eqref{embeddingskritzbesov}. \vspace{5mm}

For $1<p<\infty$ the spaces $S_p^rH([0,1)^2):=S_{p,2}^rF([0,1)^2)$ are called Sobolev spaces with dominating mixed smoothness. By choosing $q=2$ in Corollary~\ref{lizorkinkritzbesov} we obtain an analogous result on Sobolev spaces. Further, it is well known that $S_{p}^0H([0,1)^2)=L_p([0,1)^2)$. Regarding this fact we derive from Corollary~\ref{lizorkinkritzbesov} that the symmetrized Hammersley type point sets achieve an $L_p$ discrepancy of order $N^{-1}(\log{N})^{1/2}$ for all $p\in[1,\infty)$, which is best possible in the sense of~\eqref{lowproinovkritzbesov}. This however is not so surprising, since in \cite[Theorem 3]{HKP14} it has been shown that already a Davenport type symmetrization of $\calR_n$ achieves the best possible rate of $L_p$ discrepancy for all $p\in[1,\infty)$, i.e. $\|D_{\calR_n\cup\calR_{n,1}}|L_p([0,1)^2)\|\lesssim N^{-1}(\log{N})^{1/2}$.  By different means as used in this note, a certain type of symmetrized Hammersley point sets with the optimal order of $L_p$ discrepancy in a prime base $b$ has been studied by Goda~\cite[Theorem 24]{goda}, which matches our construction of $\widetilde{\calR}_n$ for $b=2$. We observe that the construction of point sets with the optimal rate of discrepancy in Besov, Triebel-Lizorkin or Sobolev spaces with negative smoothness is even more subtle than to find point sets with the optimal order of $L_p$ discrepancy. \vspace{5mm}

Finally, we would like to add a few words concerning errors of quasi-Monte Carlo (QMC) methods for numerical integration in spaces with dominating mixed smoothness. For a function $f$ in a normed space $F$ of functions on $[0,1)^2$ we would like to approximate the integral $I(f):=\int_{[0,1)^2}f(\bsx)\rd\bsx$ by a QMC algorithm $Q_N(\calP,f)=\frac{1}{N}\sum_{i=1}^{N}f(\bsx_i)$, where $\calP=\{\bsx_1,\dots,\bsx_N\}$ is a set of $N$ points in the unit square. The minimal worst-case error of QMC algorithms with respect to a class of functions $F$ is defined as
$$ \text{err}_N(F):=\inf_{\#\calP=N}\sup_{\|f|F\|\leq1}|I(f)-Q_N(\calP,f)|.$$
The infimum is extended over all point sets in $[0,1)^2$ with $N$ elements and the supremum is extended over all functions in the unit ball of $F$. We state a remarkable connection between discrepancy and integration errors in Besov spaces. Let therefore
$$ \text{disc}_N(S_{p,q}^rB([0,1)^2)):=\inf_{\#\calP=N}\|D_{\calP}(\cdot)|S_{p,q}^rB([0,1)^2)\|. $$
It is known that $S_{p',q'}^{1-r}B([0,1)^2)^{\urcorner}$ with $1/p+1/p'=1/q+1/q'=1$ is the dual space of $S_{p,q}^rB([0,1)^2)$, where $S_{p',q'}^{1-r}B([0,1)^2)^{\urcorner}$ is the class of all functions in $S_{p',q'}^{1-r}B([0,1)^2)$ with zero boundary on the upper and right boundary line. Let $1\leq p,q \leq \infty$ ($q<\infty$ if $p=1$ and $q>1$ if $p=\infty$) and $1/p<r<1/p+1$. Then we have for every integer $N\geq 2$  
\begin{equation} \label{errorkritzbesov}  \text{err}_N(S_{p,q}^rB([0,1)^2)^{\urcorner}) \asymp \text{disc}_N(S_{p',q'}^{1-r}B([0,1)^2)), \end{equation}
which follows from \cite[Theorem 6.11]{Tri10}. This relation leads to the following result:
\begin{theorem} \label{theo2kritzbesov} Let $1\leq p,q \leq \infty$ ($q<\infty$ if $p=1$ and $q>1$ if $p=\infty$) and $1/p<r<1+1/p$. Then for $N=2^{n+2}$ with $n\in\NN$ we have
$$ \mathrm{err}_N(S_{p,q}^rB([0,1)^2)^{\urcorner})\lesssim N^{-r} (\log{N})^{1-1/q}. $$
\end{theorem}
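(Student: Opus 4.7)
The plan is simply to combine the duality relation \eqref{errorkritzbesov} with the upper bound from Theorem~\ref{theo1kritzbesov}, after verifying that the hypothesis on $r$ in Theorem~\ref{theo2kritzbesov} translates exactly into the hypothesis of Theorem~\ref{theo1kritzbesov} for the dual parameters. Specifically, I would start by noting that $\mathrm{err}_N(S_{p,q}^rB([0,1)^2)^{\urcorner})$ is bounded above by a constant times $\mathrm{disc}_N(S_{p',q'}^{1-r}B([0,1)^2))$, where $p',q'$ are the usual conjugate exponents satisfying $1/p+1/p'=1/q+1/q'=1$, and then estimate the latter by the norm of the local discrepancy of $\widetilde{\calR}_n$.

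Next I would verify the parameter conditions. The assumption $1\leq p,q\leq\infty$ gives $1\leq p',q'\leq\infty$ automatically, so the integrability hypothesis of Theorem~\ref{theo1kritzbesov} is met. For the smoothness parameter, set $\tilde r:=1-r$ and observe that
\[
\frac{1}{p'}-1=-\frac{1}{p},\qquad \frac{1}{p'}=1-\frac{1}{p}.
\]
Hence $1/p<r<1+1/p$ is equivalent to $-1/p<1-r<1-1/p$, which is precisely $1/p'-1<\tilde r<1/p'$. Thus Theorem~\ref{theo1kritzbesov} applies to the triple $(p',q',1-r)$ and yields
\[
\mathrm{disc}_N(S_{p',q'}^{1-r}B([0,1)^2))\leq \|D_{\widetilde{\calR}_n}|S_{p',q'}^{1-r}B([0,1)^2)\|\lesssim N^{(1-r)-1}(\log N)^{1/q'}.
\]

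Combining these two estimates and rewriting the exponents, using $1/q'=1-1/q$, gives
\[
\mathrm{err}_N(S_{p,q}^rB([0,1)^2)^{\urcorner})\lesssim N^{-r}(\log N)^{1-1/q},
\]
which is the claimed bound. The only point that requires care is matching the side conditions $q<\infty$ if $p=1$ and $q>1$ if $p=\infty$ in \eqref{errorkritzbesov} with the conditions in Theorem~\ref{theo1kritzbesov}, but since Theorem~\ref{theo1kritzbesov} imposes no such restriction on the dual exponents $p',q'$, there is no obstacle; in fact, the listed restrictions on $(p,q)$ in Theorem~\ref{theo2kritzbesov} only serve to guarantee that the duality relation \eqref{errorkritzbesov} itself is valid. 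Thus the argument is essentially a one-line application of duality plus our main theorem, and there is no substantive obstacle to overcome.
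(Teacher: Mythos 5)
Your proposal is correct and follows exactly the paper's own argument: apply the duality relation \eqref{errorkritzbesov}, bound $\mathrm{disc}_N(S_{p',q'}^{1-r}B([0,1)^2))$ by the discrepancy norm of $\widetilde{\calR}_n$ via Theorem~\ref{theo1kritzbesov}, and check that $1/p<r<1+1/p$ is equivalent to $1/p'-1<1-r<1/p'$. The parameter verification you spell out is the same one the paper performs, so there is nothing to add.
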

\begin{proof} From \eqref{errorkritzbesov} we have
$$ \text{err}_N(S_{p,q}^rB([0,1)^2)^{\urcorner})\lesssim \text{disc}_N(S_{p',q'}^{1-r}B([0,1)^2)) $$
for $1/p<r<1+1/p$. Theorem~\ref{theo1kritzbesov} yields further
$$ \text{disc}_N(S_{p',q'}^{1-r}B([0,1)^2)) \lesssim N^{1-r-1}(\log{N})^{1/q'}=N^{-r}(\log{N})^{1-1/q} $$
for $1/p'-1<1-r<1/p'$. The last condition on $r$ is equivalent to $1/p<r<1+1/p$ and the result follows.
\end{proof}
We remark that there exists a corresponding lower bound on $\text{err}_N(S_{p,q}^rB([0,1)^2)$ which shows that the rate
of convergence in this theorem is optimal.
The novelty of Theorem~\ref{theo2kritzbesov} is the fact that in the two-dimensional case for $1<r<1+1/p$ the optimal rate of convergence can be achieved with QMC rules (based on symmetrized Hammersley type point sets). Previously, this has only been shown for the smaller parameter range $1/p<r<1$ in \cite[Theorem 5.6]{Mar2013b} (but for arbitrary dimensions). The smoothness range, for which the optimal order for the worst-case integration error is achieved, can be further extended if one either considers one-periodic functions only (see~\cite{Ull} for the case $s=2$ and \cite{HMOU} for a generalization to higher dimensions) or if one allows more general cubature rules that are not necessarily of QMC type. Results in this directions can be found for instance in~\cite{Ull}, where Hammersley type point sets were used as integration nodes of non-QMC rules, and~\cite{Ull2}, where Frolov lattices were proven to yield optimal convergence rates also for higher dimensions and for all $r>1/p$.\vspace{5mm}

 With similar arguments as above we obtain an analogous result on integration errors in Triebel-Lizorkin spaces (and hence in Sobolov spaces).
\begin{corollary} Let $1\leq p,q \leq \infty$ and $1/\min\{p,q\}<r<1+1/\min\{p,q\}$. Then for $N=2^{n+2}$ with $n\in\NN$ we have
$$ \mathrm{err}_N(S_{p,q}^rF([0,1)^2)^{\urcorner})\lesssim N^{-r} (\log{N})^{1-1/q}. $$
\end{corollary}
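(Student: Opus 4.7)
The plan is to follow precisely the same route that produced Corollary~\ref{lizorkinkritzbesov} from Theorem~\ref{theo1kritzbesov}, but one level up: use the second embedding in \eqref{embeddingskritzbesov} to dominate the Triebel-Lizorkin integration error by the corresponding Besov integration error, and then apply Theorem~\ref{theo2kritzbesov}. Concretely, from
$$S_{p,q}^r F([0,1)^2) \hookrightarrow S_{\min\{p,q\},q}^r B([0,1)^2)$$
any function of unit Triebel-Lizorkin norm has Besov norm bounded by an absolute constant $C$. Restricting the supremum in the definition of $\mathrm{err}_N$ to this smaller ball therefore gives
$$\mathrm{err}_N\bigl(S_{p,q}^r F([0,1)^2)^{\urcorner}\bigr) \lesssim \mathrm{err}_N\bigl(S_{\min\{p,q\},q}^r B([0,1)^2)^{\urcorner}\bigr).$$

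Next I would invoke Theorem~\ref{theo2kritzbesov} with parameters $(\min\{p,q\}, q, r)$ in place of $(p,q,r)$. Its smoothness hypothesis then reads $1/\min\{p,q\} < r < 1 + 1/\min\{p,q\}$, which is exactly the hypothesis of the corollary. The conclusion yields
$$\mathrm{err}_N\bigl(S_{\min\{p,q\},q}^r B([0,1)^2)^{\urcorner}\bigr) \lesssim N^{-r}(\log N)^{1-1/q},$$
and combining the two displays proves the corollary.

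The only point requiring a little care is the passage from the ambient embedding \eqref{embeddingskritzbesov} to the subspaces with vanishing trace on the upper and right boundary lines (the $\urcorner$ superscript). Since these are defined by the vanishing of continuous boundary functionals, a continuous embedding between the ambient spaces restricts to a continuous embedding between the corresponding subspaces, so this is automatic rather than an obstacle. One should also double-check the admissibility constraints on the parameters of Theorem~\ref{theo2kritzbesov} (such as $q<\infty$ when $p=1$ and $q>1$ when $p=\infty$) in the Triebel-Lizorkin setting, where $p<\infty$ is already imposed by the definition of $S_{p,q}^r F$; these are easily verified in each borderline case. Overall, once the embedding step is made, the proof is essentially a one-line reduction to Theorem~\ref{theo2kritzbesov}.
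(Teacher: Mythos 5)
Your proof is correct and follows exactly the paper's route: the second embedding in \eqref{embeddingskritzbesov} gives $\mathrm{err}_N(S_{p,q}^rF([0,1)^2)^{\urcorner})\lesssim\mathrm{err}_N(S_{\min\{p,q\},q}^rB([0,1)^2)^{\urcorner})$, and Theorem~\ref{theo2kritzbesov} applied with $\min\{p,q\}$ in place of $p$ finishes the argument. Your additional remarks on the $\urcorner$ subspaces and the borderline parameter constraints are sensible (the paper silently glosses over both) but do not change the approach.
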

\begin{proof}
This result is a consequence of the second embedding in \eqref{embeddingskritzbesov}, which implicates 
$$\mathrm{err}_N(S_{p,q}^rF([0,1)^2)^{\urcorner})\leq\mathrm{err}_N(S_{\min\{p,q\},q}^rB([0,1)^2)^{\urcorner}),$$
 and Theorem~\ref{theo2kritzbesov}.
\end{proof}

\noindent{\bf Author's Address:}

\noindent Ralph Kritzinger, Institut f\"{u}r Finanzmathematik und angewandte Zahlentheorie, Johannes Kepler Universit\"{a}t Linz, Altenbergerstra{\ss}e 69, A-4040 Linz, Austria. Email: ralph.kritzinger(at)jku.at


\begin{thebibliography}{10}
 
\bibitem{daven} H. Davenport, Note on irregularities of distribution, Mathematika 3 (1956) 131--135.
 
\bibitem{goda} T. Goda, The $b$-adic symmetrization of digital nets for quasi-Monte Carlo integration, to appear in Uniform Distribution Theory.
 
\bibitem{hala} G. Hal\'{a}sz, On Roth's method in the theory of irregularities of point distributions, in: Recent progress in analytic number theory, Vol. 2, Academic Press, London-New York, 1981, pp. 79--94. 

\bibitem{han} M. Hansen, Nonlinear Approximation and Function Spaces of Dominating
Mixed Smoothness. Dissertation, Jena, 2010.

\bibitem{hin2010} A. Hinrichs, Discrepancy of Hammersley points in Besov spaces of dominating mixed smoothness, Math. Nachr. 283 (2010) 478--488.

\bibitem{hin2014} A. Hinrichs, Discrepancy, Integration and Tractability. In J. Dick, F. Y.
Kuo, G. W. Peters, I. H. Sloan, Monte Carlo and Quasi-Monte Carlo Methods
2012 (2014).

\bibitem{HMOU} A. Hinrichs, L. Markhasin, J. Oettershagen, and T. Ullrich, Optimal quasi-Monte Carlo rules on order 2 digital
nets for the numerical integration of multivariate periodic functions, Numer. Math. DOI 10.1007/s00211-
015-0765-y, 2015.

\bibitem{HKP14} A. Hinrichs, R. Kritzinger, F. Pillichshammer, Optimal order of $L_p$-discrepancy of digit shifted Hammersley point sets in dimension 2,  Unif. Distrib. Theory 10 (2015) 115--133.



\bibitem{Mar2013} L. Markhasin, Discrepancy of generalized Hammersley type point sets in Besov spaces with dominating mixed smoothness, Unif. Distrib. Theory 8 (2013) 135--164. 

\bibitem{Mar2013b}  L. Markhasin, Discrepancy and integration in function spaces with dominating mixed smoothness, Dissertationes Mathematicae 494 (2013) 1--81.

\bibitem{Roth} K.F. Roth, On irregularities of distribution, Mathematika 1 (1954) 73--79.

\bibitem{schX} W.M. Schmidt, Irregularities of distribution. X, in: Number Theory and Algebra, Academic Press, New York, 1977, pp. 311--329.

\bibitem{Ull2} M. Ullrich and T. Ullrich, The role of Frolov's cubature formula for functions with bounded mixed
derivative, SIAM J. Numer. Anal. 54 (2) (2016) 969--993.

\bibitem{Ull} T. Ullrich, Optimal cubature in Besov spaces with dominating mixed smoothness on the
unit square, J. Complexity 30 (2014) 72--94.

\bibitem{Tri10} H. Triebel, Bases in function spaces, sampling, discrepancy, numerical integration, European Math. Soc. Publishing House, Z\"urich, 2010.

\bibitem{Tri10b} H. Triebel, Numerical integration and discrepancy, a new approach, Math. Nachr. 283 (2010) 139--159.


\end{thebibliography}
\end{document}